\documentclass[12pt,a4paper,reqno]{amsart}
\usepackage{amsmath}
\usepackage{amsfonts}
\usepackage{amssymb}
\usepackage{amsthm}
\usepackage{enumerate}
\usepackage{centernot}
\usepackage{fullpage}
\usepackage{mathrsfs}
\usepackage{graphicx}
\usepackage[colorlinks,
linkcolor=blue,
anchorcolor=green,
citecolor=blue
]{hyperref}
\usepackage{color}

\theoremstyle{plain}

\newtheorem{proposition}[]{Proposition}
\newtheorem{lemma}[]{Lemma}
\newtheorem{theorem}[]{Theorem}
\newtheorem{corollary}[]{Corollary}

\newtheorem{remark}[]{Remark}

\newcommand{\E}{\mathbb{E}}
\newcommand{\Z}{\mathbb{Z}}
\newcommand{\R}{\mathbb{R}}
\newcommand{\Cov}{\operatorname{Cov}}
\newcommand{\cone}{\operatorname{cone}}
\newcommand{\1}{\mathbf{1}}
\newcommand{\ind}{\mathrel{\leftrightarrow}}
\newcommand{\notind}{\mathrel{\nleftrightarrow}}

\newcommand{\norminf}[1]{\lVert #1\rVert_\infty}
\title[Connection inequality]{A Connection Inequality for the Random-Cluster Model}

\author{Jianping Jiang}
\address{Yau Mathematical Sciences Center, Tsinghua University, Beijing 100084, China.}
\email{jianpingjiang@tsinghua.edu.cn}

\author{Sike Lang}
\address{Qiuzhen College, Tsinghua University, Beijing 100084, China.}
\email{langsk24@mails.tsinghua.edu.cn}

\author{Yuxuan Sun}
\address{Qiuzhen College, Tsinghua University, Beijing 100084, China.}
\email{yx-sun23@mails.tsinghua.edu.cn}

\begin{document}
\begin{abstract}
	For a finite graph $G=(V,E)$, let $\phi$ be the random-cluster measure with edge parameters $(p_e)_{e\in E}$, cluster weight $q\geq 1$ and free boundary condition. For each $x \in V$ and all nonempty $ A, B \subseteq V$, we prove
	\[\phi(x \leftrightarrow B) \geq \phi(x \leftrightarrow A) \min_{a \in A}\phi(a \leftrightarrow B).\]
	For $q=1$, this is the (slightly generalized) Kozma-Nitzan inequality. Our extension to $q>1$ is based on a recent proof of this inequality by Blanc-Renaudie. As an application, we give a simple proof of sharp upper bounds for one-arm and point-to-plane connection probabilities in the subcritical regime. 
\end{abstract}
\maketitle

\section{The connection inequalities}
Let $G=(V,E)$ be a finite graph. The Fortuin-Kasteleyn (FK) random-cluster measure with edge parameters $p_e \in [0,1]$ for each $e\in E$, cluster weight $q\geq 1$ and free boundary condition is
\[\phi(\omega)=\frac{1}{Z}q^{\kappa(\omega)} \prod_{e \in E} p_e^{\omega_e} (1-p_e)^{1-\omega_e}, \qquad \omega \in \{0,1\}^E,\]
where $\kappa(\omega)$ is the number of clusters in the graph with vertex set $V$ and edge set $\{e \in E: \omega_e=1\}$ and $Z$ is the partition function. An edge $e$ is \textbf{open} if $\omega_e=1$; otherwise it is \textbf{closed}. For $x,y \in V$, write $x \ind y$ for the event that there is an open path in $G$ that connects $x$ and $y$. For any $A,B\subseteq V$, write $A \ind B$ if there exist $x\in A$ and $y \in B$ such that $x \ind y$. $A \notind B$ denotes the complement of $A \ind B$.

We also define
\[
 C_x(\omega):=\{y\in V: y\ind x\text{ in }\omega\},\qquad
 C_A(\omega):=\bigcup_{x\in A}C_x(\omega).
\]

A function $F:2^V\to\R$ is \textbf{increasing} if $F(A)\leq F(B)$ for any $A\subseteq B \subseteq V$. Write $\1_E$ for the indicator of an event $E$. Let $\E$ be the expectation with respect to $\phi$. Our main result is the following cluster function inequality.

\begin{theorem}\label{thm:conditional}
Let $A\subseteq V$ be nonempty and $x\in V$. 
For every increasing function
$F:2^V\to\R$ and every
\[
 a^*\in
 \operatorname*{arg\,min}_{a\in A}\E[F(C_a)],
\]
we have
\begin{equation}\label{eq:conditional-cluster-comparison}
 \E\!\left[
   \bigl(F(C_x)-F(C_{a^*})\bigr)
   \1_{\{x\ind A\}}
 \right]
 \geq0.
\end{equation}
\end{theorem}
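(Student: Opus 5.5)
The plan is to prove a slightly stronger statement by induction on $|E|$. The stronger statement ranges over all finite graphs, all edge parameters $p_e\in[0,1]$ (including $p_e=1$, which effectively contracts edges), and all increasing $F$. Allowing $p_e=1$ matters because conditioning on an explored piece of cluster produces exactly such a graph, in which the explored piece behaves like a single wired vertex. Two reductions come first. If $x\in A$, then on the whole space $F(C_x)\ge$ nothing useful directly, so I would handle $x\in A$ separately. There the claim reads $\E[F(C_x)]\ge\E[F(C_{a^*})]$, which is immediate from the choice of $a^*$ as a minimizer. If $x\notin A$, the events $\{x\ind A\}$ and $\{x\notind A\}$ partition the space, and the claim is equivalent to
\[
\E\bigl[F(C_x)\1_{\{x\ind A\}}\bigr]\;\ge\;\E\bigl[F(C_{a^*})\1_{\{x\ind A\}}\bigr].
\]

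The main step is a one-step exploration from $x$. Condition on the states of the edges incident to $x$, or more generally on the cluster of $x$ in $G$ with the vertices of $A$ made absorbing; call this explored set $S$. By the domain Markov property of the random-cluster measure, the conditional law of the unexplored edges is again a random-cluster measure with free boundary on a smaller graph $G'$, in which the open edges inside $S$ are contracted (equivalently, given parameter $1$). On the part of the event $\{x\ind A\}$ not already decided, $C_x$ in $G$ equals $S\cup C'_{s}$, where $C'_s$ is the cluster of the contracted vertex $s$ in $G'$. So $F(C_x)=F_S(C'_s)$ with $F_S(C):=F(C\cup S)$, and $F_S$ is again increasing. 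The induction hypothesis applied in $G'$, to the vertex $s$, the set $A$ and the function $F_S$, gives
\[
\E'\bigl[F_S(C'_s)\1_{\{s\ind A\}}\bigr]\;\ge\;\E'\bigl[F_S(C'_{a'})\1_{\{s\ind A\}}\bigr],
\]
where $a'$ minimizes $\E'[F_S(C'_a)]$ over $a\in A$.

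The remaining task is to compare the right-hand side with the original quantity $\E[F(C_{a^*})\1_{\{x\ind A\}}]$ after averaging over the exploration. Two ingredients are needed for this. The first is $F_S\ge F$ on clusters in the subgraph, which holds because $F$ is increasing. The second is a comparison, via FKG and monotonicity in boundary conditions for $q\ge1$, between the conditioned measure on $G'$ and $\phi$ restricted to the clusters of points of $A$. I expect the main obstacle to be exactly this comparison. Conditioning on $S$ closes the edges on the outer boundary of $S$, which is negative information for $C_a$, while it wires $S$ together, which is positive information. Moreover, the minimizer $a'$ in $G'$ need not equal $a^*$. My first attempt would be to organize the exploration so that only edges of $x$ are revealed one at a time. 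Then each step either closes a single edge, handled by the induction on the graph $G\setminus e$, or opens it, handled by contraction $G/e$. The needed inequality would then be a convex-combination identity
\[
\phi=\phi(\omega_e=1)\,\phi_{G/e}+\phi(\omega_e=0)\,\phi_{G\setminus e},
\]
and the desired bound would follow from the inductive statements for the two smaller graphs. Any leftover comparison of minima would be handled by using $\min_a\E[F(C_a)]\le\E[F(C_{a^*})]$ in the favourable direction, following Blanc-Renaudie's argument for $q=1$ with independence replaced by this FK decomposition.
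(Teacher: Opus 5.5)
There is a genuine gap, and it sits exactly at the step you flag as the main obstacle. The edge-by-edge induction cannot close, and the ``leftover comparison of minima'' is in the unfavourable direction, not the favourable one.

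Write $\pi:=\phi(\omega_e=1)$, $\phi_1:=\phi_{G/e}$, $\phi_0:=\phi_{G\setminus e}$, with expectations $\E_1,\E_0$ (clusters in $G/e$ lifted back to vertex sets of $G$). Your induction hypothesis gives
\[
\E\bigl[F(C_x)\1_{\{x\ind A\}}\bigr]\ \ge\ \pi\,\E_1\bigl[F(C_{a_1})\1_{\{x\ind A\}}\bigr]+(1-\pi)\,\E_0\bigl[F(C_{a_0})\1_{\{x\ind A\}}\bigr],
\]
where $a_i$ minimizes $a\mapsto\E_i[F(C_a)]$. To reach $\E[F(C_{a^*})\1_{\{x\ind A\}}]$ you must replace $a_i$ by $a^*$, which needs a \emph{lower} bound on $\E_i[F(C_{a_i})\1_{\{x\ind A\}}]$. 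Minimality of $a_i$ only gives the upper bound $\E_i[F(C_{a_i})]\le\E_i[F(C_{a^*})]$, and it says nothing about the expectation restricted to $\{x\ind A\}$.

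Strengthening the hypothesis does not rescue this. Consider the version valid for every $a\in A$:
\[
\E\bigl[F(C_x)\1_{\{x\ind A\}}\bigr]+\E\bigl[F(C_a)\1_{\{x\notind A\}}\bigr]\ \ge\ \min_{b\in A}\E[F(C_b)].
\]
Its left side is linear in the measure, but its right side is concave in it. The mixture $\phi=\pi\phi_1+(1-\pi)\phi_0$ therefore only yields $\pi\min_b\E_1[F(C_b)]+(1-\pi)\min_b\E_0[F(C_b)]$, which is $\le\min_b\E[F(C_b)]$, again the wrong direction. Exploring a larger piece $S$ of the cluster of $x$ runs into the same problem, since the minimizer depends on the conditioned measure. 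Blanc-Renaudie's argument, which the paper extends, is not an edge induction at all.

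The missing idea is a comparison whose weights on the marked points do not depend on $F$. The paper proves (Proposition~\ref{prop:comparison-weights}) that there are $c_j=-M_{nj}\ge0$, with $M$ the inverse of the matrix $h_j(i)=\phi(v_i\ind v_j\mid v_j\notind\{v_1,\dots,v_{j-1}\})$. These satisfy $\sum_{j<n}c_j=\phi(v_n\ind\{v_1,\dots,v_{n-1}\})$ and
\[
\E[F(C_{v_n})]\ge\sum_{j<n}c_j\E[F(C_{v_j})]+r_nF(\emptyset)
\]
for \emph{every} increasing $F$. The proof works as follows:
\begin{enumerate}
\item Decompose $f=\sum_j r_j(m_jh_j+\kappa_j^F)$.
\item Show that each conditional covariance vector satisfies $\kappa_j^F\in\cone\{h_{j+1},\dots,h_n\}$ (Lemma~\ref{lem:cone}). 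For $q>1$ this rests on the deletion monotonicity of Lemma~\ref{lem:deletion}, proved by an exploration coupling.
\item Apply the result with $v_1=a^*$, $v_n=x$ to an auxiliary increasing $\tilde F$ built from $F$ and $K\mapsto\E^K[F(C_{a^*})]$, and cancel terms using $Mh_1=e_1$.
\end{enumerate}
The minimizing property of $a^*$ enters only at the very last step, multiplied by the nonnegative weights $c_j$. Only there is it genuinely favourable.
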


If we take $F(C):=\1_{C \cap B \neq \emptyset}$, then Theorem \ref{thm:conditional} and the FKG inequality give
\begin{corollary}\label{cor:connection}
For every nonempty $A\subseteq V$, every $B\subseteq V$, and every
$x\in V$,
\begin{equation}\label{eq:strong}
 \phi(x\ind A,\ x\ind B)
 \geq
 \phi(x\ind A)\min_{a\in A}\phi(a\ind B).
\end{equation}
In particular,
\begin{equation}\label{eq:connection}
 \phi(x\ind B)
 \geq
 \phi(x\ind A)\min_{a\in A}\phi(a\ind B).
\end{equation}
\end{corollary}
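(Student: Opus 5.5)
We apply Theorem \ref{thm:conditional} with the function $F(C):=\1_{\{C\cap B\neq\emptyset\}}$. This function is increasing: if $C\subseteq C'$ and $C$ meets $B$, then $C'$ meets $B$.

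By definition of $C_y$, we have $F(C_y)=\1_{\{y\ind B\}}$ for every $y\in V$. Hence $\E[F(C_a)]=\phi(a\ind B)$, and any $a^*\in\arg\min_{a\in A}\phi(a\ind B)$ is a legitimate choice in the theorem. Since $A$ is finite and nonempty, such an $a^*$ exists. Inequality \eqref{eq:conditional-cluster-comparison} then reads
\[
\phi(x\ind A,\ x\ind B)\;\geq\;\E\bigl[\1_{\{a^*\ind B\}}\1_{\{x\ind A\}}\bigr]=\phi(a^*\ind B,\ x\ind A).
\]

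It remains to lower bound the right-hand side by a product. Both events $\{a^*\ind B\}$ and $\{x\ind A\}$ are increasing events in $\omega$. For $q\geq 1$ the random-cluster measure $\phi$ satisfies the FKG inequality, so
\[
\phi(a^*\ind B,\ x\ind A)\geq \phi(a^*\ind B)\,\phi(x\ind A)=\phi(x\ind A)\min_{a\in A}\phi(a\ind B).
\]
This proves \eqref{eq:strong}.

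Finally, \eqref{eq:connection} follows from \eqref{eq:strong} by monotonicity of probability, since $\{x\ind A,\ x\ind B\}\subseteq\{x\ind B\}$.

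All the substance lies in Theorem \ref{thm:conditional}. The only points needing care here are that $F$ is increasing and that the argmin is taken with respect to $\E[F(C_a)]=\phi(a\ind B)$. The degenerate case $B=\emptyset$ is trivial, since then both sides vanish.
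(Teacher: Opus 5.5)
Your argument is correct, and it is exactly the derivation the paper announces just before stating the corollary: Theorem~\ref{thm:conditional} with $F(C)=\1_{\{C\cap B\neq\emptyset\}}$, followed by FKG to split $\phi(a^*\ind B,\ x\ind A)$ into a product.

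The paper's written proof takes a slightly different route that bypasses both the theorem and FKG. It applies Proposition~\ref{prop:comparison-weights} directly with $A=\{v_1,\ldots,v_{n-1}\}$, $v_n=x$ and $F(C)=\1_{\{C\cap A\neq\emptyset\}}\1_{\{C\cap B\neq\emptyset\}}$. For $v_j\in A$ one has $F(C_{v_j})=\1_{\{v_j\ind B\}}$, and $\sum_{j<n}c_j=1-r_n=\phi(x\ind A)$, so the proposition gives directly $\phi(x\ind A,\ x\ind B)\geq\sum_{j<n}c_j\,\phi(v_j\ind B)\geq\phi(x\ind A)\min_{a\in A}\phi(a\ind B)$.

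That version shows the product bound coming from the weights, which form a sub-probability vector of total mass $\phi(x\ind A)$; no correlation inequality is needed at the end. Yours is the quicker derivation once Theorem~\ref{thm:conditional} is available.
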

The inequality \eqref{eq:connection} with $B$ a singleton and $q=1$ is Conjecture 1 in Kozma and Nitzan~\cite{KN24}, who reduced the proof of $\theta(p_c)=0$ to this inequality. The Kozma-Nitzan inequality was proved first by AI~\cite{Led26,Bou26}. Later, Blanc-Renaudie~\cite{BR26} presented a shorter, more accessible proof. The main goal of this paper is to generalize the proof for $q=1$ in~\cite{BR26} to all $q >1$. We also aim to make the proof easy to understand and we think the current proof is arguably simpler than the previous ones. It is conceivable that Corollary~\ref{cor:connection} could be useful in studying whether there is percolation at criticality. For $d=2$, it was proved that the phase transition is continuous if $1\leq q \leq 4$~\cite{DCST17} and discontinuous if $q>4$ \cite{DCGHMT21}. For $d \geq 3$, it is known that the phase transition is continuous if $q=2$~\cite{ADCS15} and discontinuous if $q$ is large enough~\cite{MMSRS91}.

As one application, we prove the following upper bounds for point-to-plane and one-arm connection probabilities.
\begin{proposition}
\label{prop:point-plane}
Let $\phi_{\Z^d}=\phi^0_{\Z^d,p,q}$ be the homogeneous nearest-neighbor free
random-cluster measure on $\Z^d$, $d\geq1$, with $p\in(0,1)$ and $q\geq1$.
Set
\[
 \tau(y):=\phi_{\Z^d}(0\ind y),
 \qquad
 P_n:=\{y\in\Z^d:y_1=n\},
\]
and
\[
 m:=
 -\lim_{k\to\infty}\frac1k\log\tau(ke_1) \text{ with }e_1:= \text{the first coordinate unit vector}.
\]
Then,
\begin{equation}\label{eq:hyperplane}
 \phi_{\Z^d}(0\ind P_n)\leq e^{-mn}, \qquad \forall n \geq 1.
\end{equation}
Moreover, for
\[
 \Lambda_n=[-n,n]^d\cap\Z^d,\qquad \partial \Lambda_n:=\Lambda_n \setminus \Lambda_{n-1},
\]
\begin{equation}\label{eq:one-arm}
 \phi_{\Z^d}(0\ind\partial\Lambda_n)
 \leq2d\,e^{-mn}, \qquad \forall n \geq 1.
\end{equation}
\end{proposition}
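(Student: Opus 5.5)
Take $A=P_n$, $x=0$, $B=\{kne_1\}$ (or a far hyperplane point) in \eqref{eq:connection}, and extract a submultiplicativity statement against $\tau$. All estimates are first done on a finite box $\Lambda_N$ with free boundary condition, where Corollary~\ref{cor:connection} applies. Then let $N\to\infty$, using that $\phi^0_{\Lambda_N}$ converges to $\phi_{\Z^d}$ and that connection events to finite sets are increasing limits of local events.

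\textbf{Hyperplane bound.} Fix $n$ and $k\geq1$, and set $A=P_n\cap\Lambda_N$ and $B=\{(k+1)ne_1\}$. Corollary~\ref{cor:connection} gives
\[
\phi_N(0\ind (k+1)ne_1)\ge \phi_N(0\ind A)\min_{a\in A}\phi_N(a\ind (k+1)ne_1).
\]
I will use the iterated version with hyperplanes $P_n,P_{2n},\dots,P_{(k+1)n}$ instead of the point. Apply \eqref{eq:connection} with $x=0$, $A=P_n$, $B=P_{jn}$. By translation invariance in the limit, the minimum over $a\in P_n$ of $\phi(a\ind P_{jn})$ is $\phi(0\ind P_{(j-1)n})$, since every $a\in P_n$ sees $P_{jn}$ as the hyperplane at distance $(j-1)n$. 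Hence $f(j):=\phi(0\ind P_{jn})$ is supermultiplicative: $f(j)\ge f(1)f(j-1)$, so $f(k)\ge f(1)^k$.

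We also need $f(k)\lesssim$ something like $e^{-mkn}$ up to subexponential factors. For this, union bound over $y\in P_{kn}$:
\[
f(k)\le \sum_{y\in P_{kn}}\tau(y).
\]
The planar sum is controlled by $\tau(y)\le e^{-m\|y\|}$-type estimates. Alternatively, avoid this by reversing roles so that a point appears: take $B=\{kne_1\}$ and $A=P_n$. The minimum over $a\in P_n$ of $\tau(kne_1-a)$ is problematic, because far-away $a$ give tiny values. The point is therefore to restrict $A$ to $P_n\cap\Lambda_M$, with $M$ chosen so that most of the mass of $\{0\ind P_n\}$ is captured. Then the minimum is at least $\min_{|z|\le M}\tau((k-1)ne_1+z)$. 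By finite energy (or FKG with a path of length $2dM$), this is at least $c_M\tau((k-1)ne_1)$. This yields
\[
\tau(kne_1)\ge c_M\,\phi(0\ind P_n\cap\Lambda_M)\,\tau((k-1)ne_1).
\]
Iterating gives $\tau(kne_1)\ge (c_M\phi(0\ind P_n\cap\Lambda_M))^{k}$ up to a constant. Taking $\frac1{kn}\log$ and $k\to\infty$ gives $-m\ge \frac1n\log(c_M\phi(0\ind P_n\cap\Lambda_M))$. The constant $c_M$ ruins sharpness, so I remove it by a second-level iteration. Apply the same argument with $n$ replaced by $jn$, and bound $\phi(0\ind P_{jn}\cap\Lambda_M)\ge f(1)^j-\epsilon_M$ via supermultiplicativity of $f$, letting $M\to\infty$ only after $k\to\infty$. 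This gives $\frac1{jn}\log f(1)^j\le -m+\frac{\log c_M^{-1}}{jn}$ for suitable $M=M(j)$. The delicate point is that $c_M$ must be fixed independently of $j$. The cleaner route is to choose $A$ differently: use the whole plane and take $B=P_{kn}$, which makes the minimum exact by translation invariance. Then $f(1)^k\le f(k)$, and I need $\limsup\frac1{kn}\log f(k)\le -m$. This follows by union bound once $\sum_{y\in P_{L}}\tau(y)$ is shown to decay at rate $m$ subexponentially. That in turn follows from the standard facts that $\tau(y)\le e^{-m\|y\|_\infty}$-type bounds hold (by subadditivity and symmetry $m$ equals the $\infty$-direction rate minimal over directions, and $\tau(y)\le e^{-m y_1}$) together with exponential decay in transverse directions. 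This last step is the main obstacle. Only the case $m>0$ matters; for $m=0$ the bound is trivial.

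\textbf{One-arm bound.} $\partial\Lambda_n\subseteq\bigcup_{\pm,i}\{y:\pm y_i=n\}$. By lattice symmetries each of the $2d$ faces has connection probability $\phi(0\ind P_n)$, so a union bound with \eqref{eq:hyperplane} gives \eqref{eq:one-arm}.
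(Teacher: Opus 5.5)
Your final route (iterate \eqref{eq:connection} with a hyperplane as $B$, then compare against a union bound) is valid and differs from the paper's. However, you stop exactly at the step you call the main obstacle, and that step is not actually hard: no separate transverse-decay input is needed. The bound $\tau(y)\le e^{-m\norminf{y}}$ that you already invoke follows from $\tau(y)^2\le\tau(2\norminf{y}e_1)$, by reflecting the transverse coordinates and using FKG, or equivalently from subadditivity plus lattice symmetry of the rate, as you sketch. That bound already decays in the transverse directions. For $y\in P_L$ we have $\norminf{y}\ge L$, with $(2L+1)^{d-1}$ points at $\norminf{y}=L$ and $O(r^{d-2})$ points at $\norminf{y}=r>L$. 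So for $m>0$,
\[
 \phi_{\Z^d}(0\ind P_L)\le\sum_{y\in P_L}\tau(y)\le C_{d,m}L^{d-1}e^{-mL}.
\]
Then $f(1)^k\le f(k)$ gives $f(1)\le\bigl(C_{d,m}(kn)^{d-1}\bigr)^{1/k}e^{-mn}\to e^{-mn}$. More cleanly: your argument with $A=P_a$, $B=P_{a+b}$ shows that $n\mapsto-\log\phi_{\Z^d}(0\ind P_n)$ is subadditive. By Fekete it is at least $n$ times its limiting slope, which the display shows is $\geq m$.

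One technical caution: the identity $\min_{a\in P_n}\phi_{\Z^d}(a\ind P_{jn})=f(j-1)$ uses translation invariance, which holds only in infinite volume. So apply the corollary on $\Lambda_N$ with a fixed finite $A\subset P_n$, let $N\to\infty$ (only finitely many $a$ are involved), and only then let $A\uparrow P_n$. With $A=P_n\cap\Lambda_N$ growing with the box, as in your first display, points of $A$ near the boundary of $\Lambda_N$ depress the finite-volume minimum. The detour through $P_n\cap\Lambda_M$ and finite-energy constants $c_M$ can simply be dropped.

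The paper avoids iteration altogether. It feeds the increasing functional $F(C)=\sum_{y\in C}e^{sy_1}$ into the weighted comparison (Lemma~\ref{lem:weighted}, i.e.\ Proposition~\ref{prop:comparison-weights} with a general increasing $F$). Translation covariance then makes the minimum exact: $S_w(a)=e^{sn}M_1(s)$ for every $a\in P_n$. The inequality $M_1(s)\ge\phi_{\Z^d}(0\ind A)e^{sn}M_1(s)$ gives $\phi_{\Z^d}(0\ind P_n)\le e^{-sn}$ directly. It needs only $M_1(s)<\infty$ for $s<m$ (again a consequence of the $\ell^\infty$ bound), with no polynomial prefactors or $k$-th roots. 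Your route needs only the plain connection inequality \eqref{eq:connection}, not the general-$F$ comparison weights. As a by-product it yields that $\phi_{\Z^d}(0\ind P_n)$ is supermultiplicative in $n$, at the cost of the union bound and the limiting argument. Your one-arm step is the same as the paper's.
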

For $q=1$, bounds of this form were a main input in~\cite{DL26} in their study of first-passage percolation; see Propositions 2.3 and 2.4 in~\cite{DL26}. They proved the upper bounds using Ornstein-Zernike behavior for subcritical Bernoulli percolation from \cite{CCC91}. For general $q\geq 1$, the asymptotic behavior of the one-arm probability was established in Corollary 1.1 of \cite{CIV08} using Ornstein-Zernike decay. Proposition~\ref{prop:point-plane} gives the sharp upper bounds (up to a multiplicative constant) by a simple application of the connection inequality.

In the paper, we assume $p_e \in (0,1)$ for each $e \in E$ so that all avoidance events used for conditioning have positive probability. It is clear that Theorem~\ref{thm:conditional} and Corollary~\ref{cor:connection} also hold for all $p_e\in [0,1]$ by continuity (see Remark~\ref{rem:continuity}).

We briefly describe the structure of the paper.
Section~\ref{sec:deletion} is devoted to handling the dependence present when $q>1$; we use exploration coupling to show that certain centered cluster observables remain monotone when additional vertices are deleted. The main proof in Sections~\ref{sec:conditional} and~\ref{sec:comparison} relies on Blanc-Renaudie's proof \cite{BR26}.
In Section~\ref{sec:conditional},  we use the monotonicity obtained in Section~\ref{sec:deletion} to prove that the conditional covariance vectors associated with an ordered family of marked vertices belong to a suitable nonnegative cone.

Choose distinct marked vertices $v_1,\ldots,v_n$. For $1\leq i,j\leq n$, define the \textbf{connection probabilities}
\[
 h_j(i):=\phi(v_i\ind v_j\mid v_j \notind \{v_1,\dots,v_{j-1}\}).
\]
Let $H:=(h_1\ \cdots\ h_n)$ and $M:=H^{-1}$. Since $h_j(i)=0$ for $i<j$ and $h_j(j)=1$, both $H$ and $M$ are unit lower triangular. In Section~\ref{sec:comparison}, we prove
\begin{proposition}\label{prop:comparison-weights}
Let $v_1,\ldots,v_n$ be distinct marked vertices. For  $j<n$,
define
\(c_j:=-M_{nj}\geq 0\). For every increasing
$F:2^V\to\mathbb R$,
\begin{equation}\label{eq:weighted-cluster-comparison}
    \E[F(C_{v_n})]
    \geq
    \sum_{j<n}c_j\,\E[F(C_{v_j})]+r_nF(\emptyset),
\end{equation}
where
\begin{equation}\label{eq:weight-mass}
    r_n:=1-\sum_{j<n}c_j=\phi(v_n\notind \{v_1,\dots,v_{n-1}\}).
\end{equation}
In particular, the weights $c_j$ depend only on the marked vertices
and not on $F$.
\end{proposition}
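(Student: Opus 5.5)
The plan is to rewrite \eqref{eq:weighted-cluster-comparison} as a sign condition on one coordinate of $M$ applied to a vector, and then to check that sign using the cone property of conditional covariance vectors from Section~\ref{sec:conditional}.

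\textbf{Step 1: the weight identity \eqref{eq:weight-mass}.} Write $D_j:=\{v_j\notind\{v_1,\dots,v_{j-1}\}\}$ and $d_j:=\phi(D_j)$. For each $i$, the events $\{v_i\ind v_j\}\cap D_j$, for $j\le i$, partition the sample space according to the smallest index $j$ with $v_i\ind v_j$. Hence
\[
\sum_{j} d_j\,h_j(i)=1 \quad\text{for every } i, \qquad\text{that is,}\qquad Hd=\mathbf 1 .
\]
Applying $M$ gives $M\mathbf 1=d$. Reading off row $n$, and using $M_{nn}=1$, we get $1-\sum_{j<n}c_j=\sum_j M_{nj}=d_n$, which is \eqref{eq:weight-mass}.

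\textbf{Step 2: reduction to a sign condition.} Set $G(i):=\E[F(C_{v_i})]-F(\emptyset)\ge 0$. By Step 1, \eqref{eq:weighted-cluster-comparison} is equivalent to
\[
\sum_j M_{nj}\,G(j)\ge 0 ,
\]
that is, $y_n\ge 0$ for the unique vector $y$ with $G=Hy$.

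\textbf{Step 3: decomposing $G$ along the columns of $H$.} Use the same partition as in Step 1. On $\{v_i\ind v_j\}\cap D_j$ we have $C_{v_i}=C_{v_j}$, so
\[
G(i)=\sum_j d_j\, w_j(i),\qquad w_j(i):=\E\bigl[(F(C_{v_j})-F(\emptyset))\1_{\{v_i\ind v_j\}}\,\big|\, D_j\bigr].
\]
Split each $w_j$ as $w_j=\alpha_j h_j+\kappa_j$, where
\[
\alpha_j:=\E[F(C_{v_j})-F(\emptyset)\mid D_j]\ge 0,\qquad \kappa_j(i):=\Cov\bigl(F(C_{v_j}),\1_{\{v_i\ind v_j\}}\,\big|\,D_j\bigr).
\]
Because $h_j(i)=0$ for $i<j$, and $v_j$ is always connected to itself, $\kappa_j(i)=0$ for all $i\le j$. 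In particular $\kappa_n=0$.

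\textbf{Step 4: the main input.} The key ingredient is the result of Section~\ref{sec:conditional}: for increasing $F$, each conditional covariance vector $\kappa_j$ lies in the nonnegative cone $\cone(h_{j+1},\dots,h_n)$. This is where the $q>1$ dependence enters, through the deletion monotonicity of Section~\ref{sec:deletion}. I expect this to be the real obstacle, and I take it as given from the earlier sections. Granting it, write $\kappa_j=\sum_{k>j}\beta_{jk}h_k$ with $\beta_{jk}\ge 0$. Then
\[
G=H y,\qquad y_k=d_k\alpha_k+\sum_{j<k}d_j\beta_{jk}\ \ge 0 .
\]
By invertibility of $H$ this $y$ is the one in Step 2, so $y_n\ge0$, which is the claim.

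\textbf{Step 5: nonnegativity of $c_j$ and independence from $F$.} The identity $y=MG$ shows that the coefficients depend only on $M$, hence only on the marked vertices. To get $c_j=-M_{nj}\ge 0$, apply the same argument to the subfamily $v_1,\dots,v_j,v_n$, or more directly specialise to $F(C)=\1_{\{C\ni v_j\}}$. I would first try the direct specialisation, checking that it isolates $-M_{nj}$ as a nonnegative combination of the $\beta$'s. If that fails, I would argue by induction on $n$ using the triangular structure of $M$.
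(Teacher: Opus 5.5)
Your Steps 1--4 are correct and are essentially the paper's argument. The paper writes $f=\sum_j r_j(m_jh_j+\kappa_j^F)$ and applies $M$, using $Mh_k=e_k$ and Lemma~\ref{lem:cone} to get $M\kappa_j^F\ge0$. Your expansion of $G$ in the basis $h_1,\dots,h_n$ with nonnegative coefficients is the same computation.

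The gap is Step~5. The statement asserts $c_j=-M_{nj}\ge0$, and this sign is what the later applications need (the final lines of the proofs of Theorem~\ref{thm:conditional} and Corollary~\ref{cor:connection}). You do not prove it, and none of your three suggestions works as stated:
\begin{enumerate}
\item The specialisation $F(C)=\1_{\{C\ni v_j\}}$ gives $G(k)=\phi(v_k\ind v_j)$ with $G(j)=1$. Step~4 then only yields $M_{nj}+\sum_{k\ne j}M_{nk}\,\phi(v_k\ind v_j)\ge0$. That is a lower bound on $M_{nj}$, the wrong direction.
\item An induction using only the triangular structure cannot work. From $MH=I$ one gets $M_{n,n-2}=h_{n-1}(n)\,h_{n-2}(n-1)-h_{n-2}(n)$. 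Its sign is a genuine connection inequality for $\phi$, and it fails for general unit lower triangular matrices: $h_{n-2}(n)=0$ and $h_{n-2}(n-1)=h_{n-1}(n)=1/2$ give $M_{n,n-2}=1/4>0$.
\item The subfamily $v_1,\dots,v_j,v_n$ has a different last column, $\phi(\cdot\ind v_n\mid v_n\notind\{v_1,\dots,v_j\})\neq h_n$. So its weights are not the numbers $-M_{nj}$.
\end{enumerate}

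The missing idea is to feed Lemma~\ref{lem:cone} a test function whose conditional covariance vector is a combination of $h_j$ and the coordinate vector $e_j$. The paper uses $F_0(C)=\1_{\{|C|\ge2\}}$. Set $s_j:=\phi(C_{v_j}=\{v_j\}\mid D_j)>0$. Then $\kappa_j^{F_0}=s_j(h_j-e_j)$: coordinates $i\le j$ vanish, and for $i>j$ the event $\{v_i\ind v_j\}$ forces $|C_{v_j}|\ge2$, so the covariance is $h_j(i)-(1-s_j)h_j(i)$. As in your Step~4, write $s_j(h_j-e_j)=\sum_{k>j}\beta_{jk}h_k$ with $\beta_{jk}\ge0$. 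Applying $M$ gives $Me_j=e_j-s_j^{-1}\sum_{k>j}\beta_{jk}e_k$, that is, $M_{kj}=-\beta_{jk}/s_j\le0$ for $k>j$. In particular $c_j\ge0$, which is exactly how the paper obtains \eqref{eq:M-sign}.
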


Theorem~\ref{thm:conditional}, Corollary~\ref{cor:connection} and Proposition~\ref{prop:point-plane} all follow from the inequality \eqref{eq:weighted-cluster-comparison} for some choice of $F$. So we believe this inequality may have further applications.


\section{Deletion monotonicity}\label{sec:deletion}

To prove the deletion monotonicity lemma below, we use three standard properties of the random-cluster model: the domain Markov property, comparison between boundary conditions, and the FKG inequality. Proofs of these properties can be found, for example, in \cite{Gri06} and \cite{DC20}.

\medskip\noindent\textbf{Deleted measures.}
For $K\subseteq V$, let $\phi^K$ be the free FK measure obtained by forcing every edge incident to $K$ to be closed. We write $\E^K$ for expectation under $\phi^K$.

If $K\subseteq L$, then $\phi^L$ is stochastically dominated by $\phi^K$. For $U\subseteq V$, let $\mathcal{F}_U$ be the sigma-algebra generated by an exploration that reveals every edge incident to $C_U$, leaving the edges with both endpoints outside $C_U$ unexamined. By the domain Markov property, for any event $E$
depending only on edges with both endpoints outside $C_U$,
\begin{equation}\label{eq:revealed-clusters}
 \phi(E \mid \mathcal{F}_U)=\phi^{C_U}(E),\qquad
 \phi^K(E\mid \mathcal{F}_U)=\phi^{C_U\cup K}(E),
\end{equation}
almost surely under the respective measures. 

\begin{lemma}[Deletion monotonicity]\label{lem:deletion}
Fix $U\subseteq V$, $b\notin U$, and an increasing $F:2^V\to\mathbb R$. Set
\begin{equation}\label{eq:gZ}
 g(S):=\E^S[F(C_b)],\qquad
 Z:=\1_{\{b\notin C_U\}}\bigl(F(C_b)-g(C_U)\bigr).
\end{equation}
Then
\begin{equation}\label{eq:Z-centered}
 \E[Z\mid \mathcal{F}_U]=0, \qquad \E[Z]=0.
\end{equation}

Moreover, the function
\begin{equation}\label{eq:Psi}
 \Psi(K):=-\E^K[Z]
\end{equation}
is increasing in $K\subseteq V$.
\end{lemma}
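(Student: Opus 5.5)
The plan is to handle the centering identity \eqref{eq:Z-centered} with the domain Markov property \eqref{eq:revealed-clusters}, and then to derive a formula for $\Psi$ from the same property, so that monotonicity becomes a comparison problem. I have not closed the monotonicity comparison; the coupling inequality \eqref{eq:needed} below is the hard step, and only a strategy for it is given.

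\textbf{Step 1: centering.} On the $\mathcal F_U$-measurable event $\{b\notin C_U\}$, the cluster $C_b$ is determined by edges with both endpoints outside $C_U$. Indeed, every edge incident to $C_U$ has been revealed, and none of them is open toward $b$'s cluster. Applying \eqref{eq:revealed-clusters} to the events $\{C_b=D\}$ with $D\cap C_U=\emptyset$ shows that, conditionally on $\mathcal F_U$, $C_b$ has the law of $C_b$ under $\phi^{C_U}$. Hence
\[
\E[F(C_b)\1_{\{b\notin C_U\}}\mid\mathcal F_U]=\1_{\{b\notin C_U\}}\,g(C_U).
\]
This gives $\E[Z\mid\mathcal F_U]=0$, and taking expectations gives $\E[Z]=0$.

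\textbf{Step 2: a formula for $\Psi$.} Run the same exploration of $C_U$ under $\phi^K$ and use the second identity in \eqref{eq:revealed-clusters}. This gives $\E^K[F(C_b)\1_{\{b\notin C_U\}}\mid\mathcal F_U]=\1_{\{b\notin C_U\}}\,g(C_U\cup K)$, and therefore
\[
\Psi(K)=\E^K\!\left[\1_{\{b\notin C_U\}}\bigl(g(C_U)-g(C_U\cup K)\bigr)\right].
\]
Since $F$ is increasing and $\phi^{S'}$ is dominated by $\phi^{S}$ for $S\subseteq S'$, the function $g$ is decreasing. In particular the integrand is nonnegative, so $\Psi\ge 0=\Psi(\emptyset)$.

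\textbf{Step 3: monotonicity in $K$.} Fix $K\subseteq L$. Set $h_K(S):=\1_{\{b\notin S\}}(g(S)-g(S\cup K))$, so that $\Psi(K)=\E^K[h_K(C_U)]$. Because $g$ is decreasing, $h_K\le h_L$ pointwise, which gives
\[
\Psi(K)\le\E^K[h_L(C_U)].
\]
What remains is
\begin{equation}\label{eq:needed}
\E^K[h_L(C_U)]\le\E^L[h_L(C_U)].
\end{equation}
In words, revealing the cluster of $U$ under the smaller measure $\phi^L$ should leave more room for the deletion of $L$ to affect $b$.

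For \eqref{eq:needed} I would use a monotone exploration coupling of $\phi^K$ and $\phi^L$. Explore $C_U$ edge by edge in both configurations, always querying the same edge. At each step, couple the edge states so that the $\phi^L$-configuration lies below the $\phi^K$-configuration; this is possible by comparison of boundary conditions together with \eqref{eq:revealed-clusters}. The coupling gives $C^L_U\subseteq C^K_U$, so $\{b\notin C^K_U\}\subseteq\{b\notin C^L_U\}$. It then suffices to show that on $\{b\notin C_U^K\}$,
\[
g(C^L_U)-g(C^L_U\cup L)\ \ge\ g(C^K_U)-g(C^K_U\cup L).
\]
Equivalently, the marginal effect of deleting $L$ on $b$'s cluster is larger when less has already been deleted.

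\textbf{Main obstacle.} That last inequality is a supermodularity-type property of $g$. It is the main obstacle, because I do not expect it to hold for arbitrary sets; it should use that $C^K_U\setminus C^L_U$ is itself a union of clusters cut off from $b$.

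I would first reduce to $L=K\cup\{v\}$ by induction on $|L\setminus K|$. For a single vertex I would write
\[
g(S)-g(S\cup\{v\})=\E^{S}\!\left[\1_{\{b\ind v\}}\bigl(F(C_b)-\E^{S\cup\{v\}}[F(C_b)]\bigr)\right],
\]
which follows from a further domain Markov step exploring $C_v$. This expresses the difference as a nonnegative "centered" quantity of the same type as $Z$. I would then compare it for $S=C^L_U$ and $S=C^K_U$ by continuing the monotone exploration coupling from the cluster of $U$ to the cluster of $v$. The point is to keep both configurations ordered while the set of explored edges remains common to both.
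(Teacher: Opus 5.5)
\textbf{Verdict: there is a genuine gap.} The monotonicity of $\Psi$ is not proved, and the pointwise route you propose for it is false.

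\textbf{What is right.} Your Steps 1 and 2 are exactly the paper's argument: centering via \eqref{eq:revealed-clusters}, and the formula $\Psi(K)=\E^K[\1_{\{b\notin C_U\}}(g(C_U)-g(C_U\cup K))]\geq0$. Your reduction $\Psi(K)\leq\E^K[h_L(C_U)]$ is also correct. The remaining inequality $\E^K[h_L(C_U)]\leq\E^L[h_L(C_U)]$ is in fact true, but you do not prove it.

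\textbf{Why your route fails.} The inequality $g(C^L_U)-g(C^L_U\cup L)\geq g(C^K_U)-g(C^K_U\cup L)$ on $\{b\notin C^K_U\}$ fails under \emph{every} coupling. It fails already for $q=1$, for a single-vertex increment, and with $C^K_U\setminus C^L_U$ cut off from $b$. Take vertices $b,k,w,y,v,z$ and edges $bk,ky,bw,wy,zv,vw$, all with $p_e=p$, and set $q=1$, $F(C)=\1_{\{y\in C\}}$, $U=\{z\}$, $K=\{k\}$, $L=\{k,v\}$.
\begin{itemize}
\item The function $g$ only sees $k$ and $w$: $g(\emptyset)=2p^2-p^4$, $g(\{k\})=g(\{w\})=p^2$, $g(\{k,w\})=0$.
\item With probability $p^2(1-p)^2$ the $\phi^K$-configuration has $zv,vw$ open and $bw,wy$ closed. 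Then $C^K_U=\{z,v,w\}\not\ni b$, while $C^L_U=\{z\}$ always.
\item Your left side is $g(\emptyset)-g(\{k\})=p^2-p^4$, but your right side is $g(\{w\})-g(\{k,w\})=p^2$.
\end{itemize}

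Two further problems with the single-vertex sketch. First, your identity for $g(S)-g(S\cup\{v\})$ is wrong: exploring $C_v$ under $\phi^S$ gives an extra term $\E^S[\1_{\{b\notin C_v\}}(g(S\cup C_v)-g(S\cup\{v\}))]\leq0$. For the triangle on $b,v,y$ with $q=1$, $S=\emptyset$, $F=\1_{\{y\in C\}}$, the two sides are $p^2(1-p)$ and $p^2(1-p)(2-p)$. Second, with $L=K\cup\{v\}$, the quantity $g(S)-g(S\cup L)$ is not a single-vertex difference anyway.

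\textbf{The missing idea} is the paper's auxiliary third configuration. After exploring $S_L=C_U(\omega_L)$ (revealing the same edges of $\omega_K$), start $\eta\sim\phi^{S_L}$ on the same common uniforms. Then explore $S_K=C_U(\omega_K)$ in all three configurations. Put $Y=F(C_b(\eta))$ and $X_L=F(C_b(\omega_L))$.
\begin{itemize}
\item By construction $g(S_L)=\hat\E[Y\mid S_L]$.
\item Since $\eta\geq\omega_L$ off $S_L$, we have $Y\geq X_L$ on $\{b\notin S_L\}\supseteq\{b\notin S_K\}$. Hence
\[
\Psi(L)=\hat\E\bigl[\1_{\{b\notin S_L\}}(Y-X_L)\bigr]\geq\hat\E\bigl[\1_{\{b\notin S_K\}}(Y-X_L)\bigr].
\]
\item Condition on the exploration of $S_K$. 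The remaining law of $\omega_L$ is $\phi^{S_K\cup L}$. The remaining law of $\eta$ dominates $\phi^{S_K}$, because its revealed edges can only wire the boundary.
\end{itemize}
This yields exactly $\Psi(L)\geq\E^K[h_L(C_U)]$, your missing inequality. The paper instead uses $X_K\geq X_L$ and lands directly on $\Psi(K)$.

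\textbf{Why this works where your comparison does not.} The comparison is made on the random variable $Y-X_L$ \emph{before} averaging, and only then averaged over the finer $\sigma$-field of the $S_K$-exploration. In the example, on the event above, that exploration also reveals $bw,wy$ closed in $\eta$ and in $\omega_L$. So $\hat\E[Y-X_L\mid\cdot\,]=p^2-0$ there, whereas your comparison uses the coarser average $g(S_L)-g(S_L\cup L)=p^2-p^4$. The discrepancy is compensated on other events, which no pointwise inequality between values of $g$ can see.
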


\begin{proof}
On $\{b\notin C_U\}$, the cluster $C_b$ lies in the exterior of $C_U$.
Thus \eqref{eq:revealed-clusters} gives
\[
 \1_{\{b\notin C_U\}}\E[F(C_b)\mid \mathcal{F}_U]=\1_{\{b\notin C_U\}}g(C_U),\qquad
 \1_{\{b\notin C_U\}}\E^K[F(C_b)\mid \mathcal{F}_U]=\1_{\{b\notin C_U\}}g(C_U\cup K).
\]
Since $Z=0$ on $\{b\in C_U\}$, the first identity implies
\begin{equation*}
 \E[Z\mid \mathcal{F}_U]
 =\1_{\{b\notin C_U\}}
   \bigl(\E[F(C_b)\mid \mathcal{F}_U]-g(C_U)\bigr)=0,
\end{equation*}
and hence $\E[Z]=0$. The second identity yields
\begin{equation}\label{eq:Psi-identity}
 \Psi(K)=\E^K\!\left[\1_{\{b\notin C_U\}}
 \bigl(g(C_U)-g(C_U\cup K)\bigr)\right]\geq0,
\end{equation}
where the inequality follows because $g$ is decreasing.

Fix $K\subseteq L$. We construct a coupling of three configurations:
$\omega_L\sim\phi^L$, $\omega_K\sim\phi^K$, and an auxiliary $\eta$.
Write $S_L:=C_U(\omega_L)$ and $S_K:=C_U(\omega_K)$.
For the remainder of this proof, $\hat{\E}$ denotes expectation under this coupling. At each step, query the same edge in all active configurations. Each query uses a fresh common uniform variable and each configuration's own opening probability conditioned on exposed edges.

First explore $S_L$ in $\omega_L$, revealing the same edges in $\omega_K$, until all edges incident to $S_L$ are revealed. We then add a new configuration $\eta$ with law $\phi^{S_L}$, and explore $S_K$. Once all edges incident to $S_K$ are revealed, complete the still-unqueried
edges with both endpoints outside $S_K$ in any order by the same rule.

After all edges incident to $S_L$ have been explored, the remaining exterior law of $\omega_L$ is $\phi^{S_L\cup L}$, dominated by the law $\phi^{S_L}$ used for $\eta$.
Comparison between boundary conditions gives
\begin{equation}\label{eq:coupling-order}
 S_L\subseteq S_K,\ 
 \omega_L\leq\omega_K \text{ on all edges},\ 
 \omega_L\leq\eta \text{ on edges with both endpoints in } V \setminus S_L.
\end{equation}


We can now compare $\Psi(L)$ and $\Psi(K)$. Put
\[
 X_L:=F(C_b(\omega_L)),\qquad X_K:=F(C_b(\omega_K)),\qquad
 Y:=F(C_b(\eta)).
\]
The conditional law of $\eta$ gives $\hat{\E}[Y\mid S_L]=g(S_L)$. Since $\omega_L\sim\phi^L$,
the definition of $\Psi$ gives
\begin{align}
 \Psi(L)
 &=\E^L\bigl[\1_{\{b\notin C_U\}}(g(C_U)-F(C_b))\bigr]\nonumber\\
 &=\hat{\E}\bigl[\1_{\{b\notin S_L\}}(g(S_L)-X_L)\bigr]\nonumber\\
 &=\hat{\E}\bigl[\1_{\{b\notin S_L\}}(Y-X_L)\bigr].\label{eq:PsiL}
\end{align}
By \eqref{eq:coupling-order}, $X_K\geq X_L$ everywhere and $Y\geq X_L$
on $\{b\notin S_L\}$. Together with $S_L\subseteq S_K$, this gives the
pointwise comparison
\begin{equation}\label{eq:comparisonXandY}
    \1_{\{b\notin S_L\}}(Y-X_L)
 \geq\1_{\{b\notin S_K\}}(Y-X_K).
\end{equation}

Recall that after all edges incident to $S_K$ have been explored in all three configurations, the remaining exterior laws for $\omega_L$ and $\omega_K$ are $\phi^{S_K \cup L}$ and $\phi^{S_K \cup K}$ respectively; the remaining exterior law of $\eta$ may have boundary wiring, so it dominates $\phi^{S_K}$. Therefore,
\begin{equation}\label{eq:continuation-means}
 \1_{\{b\notin S_K\}}\hat{\E}[X_K\mid S_K]=\1_{\{b\notin S_K\}}g(S_K\cup K),\qquad
 \1_{\{b\notin S_K\}}\hat{\E}[Y\mid S_K]\geq \1_{\{b\notin S_K\}} g(S_K).
\end{equation}
Plugging \eqref{eq:comparisonXandY} into \eqref{eq:PsiL}, and then applying \eqref{eq:continuation-means}, we obtain
\begin{align*}
 \Psi(L)
 &\geq \hat{\E}\bigl[\1_{\{b\notin S_K\}}(Y-X_K)\bigr]\\
 &\geq \hat{\E} \bigl[\1_{\{b\notin S_K\}}
       (g(S_K)-g(S_K\cup K))\bigr]
 =\Psi(K),
\end{align*}
where we have used \eqref{eq:Psi-identity} in the last equality. Hence $\Psi$ is increasing.
\end{proof}

\section{Conditional covariance cone}\label{sec:conditional}
Choose distinct marked vertices $v_1,\ldots,v_n$. For $1\leq j\leq n$, define
\[
 U_j:=\{v_1,\ldots,v_{j-1}\},\qquad
 D_j:=\{v_j\notind U_j\} \text{ with }D_1:=\{0,1\}^E,\qquad r_j:=\phi(D_j)>0.
\]
This definition of $r_n$ agrees with \eqref{eq:weight-mass}. For $1\leq i,j\leq n$, recall that the connection probabilities are defined by $h_j(i):=\phi(v_i\ind v_j\mid D_j)$ and define the
\textbf{conditional covariances}
\begin{equation}\label{eq:kappa}
 \kappa_j^F(i):=\Cov\!\left(F(C_{v_j}),\1_{\{v_i\ind v_j\}}\mid D_j\right).
\end{equation}
We regard $h_j$ and $\kappa_j^F$ as column vectors in $\mathbb R^n$.

For vectors $u_1,\ldots,u_m$, write $\cone\{u_1,\ldots,u_m\}$ for their \textbf{nonnegative cone}, namely the set of all nonnegative linear combinations of these vectors.

\begin{lemma}[Conditional covariance cone]\label{lem:cone}
For every increasing $F:2^V\to\mathbb R$ and every $j$,
\begin{equation}\label{eq:cone}
 \kappa_j^F\in\cone\{h_{j+1},\ldots,h_n\},
\end{equation}
where the cone is $\{0\}$ when $j=n$.
\end{lemma}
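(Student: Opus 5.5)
The plan is to solve for the coefficients triangularly and to derive their nonnegativity from Lemma~\ref{lem:deletion}, rather than to attack all the entries of $\kappa^F_j$ at once.

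\textbf{Support of $\kappa^F_j$.} For $i<j$ we have $v_i\in U_j$, so on $D_j$ the indicator $\1_{\{v_i\ind v_j\}}$ vanishes identically and $\kappa^F_j(i)=0$. For $i=j$ the indicator is identically $1$, so again $\kappa^F_j(j)=0$. Thus $\kappa^F_j$ is supported on $\{j+1,\dots,n\}$. In particular the case $j=n$ is trivial.

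\textbf{Existence of a representation.} The vectors $h_{j+1},\dots,h_n$ are unit lower triangular on these coordinates, since $h_k(i)=0$ for $i<k$ and $h_k(k)=1$. Hence there are unique real coefficients $\lambda_{j+1},\dots,\lambda_n$ with $\kappa^F_j=\sum_{k>j}\lambda_k h_k$. They are given recursively by
\[
\lambda_k=\kappa^F_j(k)-\sum_{j<l<k}\lambda_l h_l(k).
\]
The whole content of the lemma is therefore the claim that $\lambda_k\geq 0$ for every $k>j$.

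\textbf{Probabilistic formula for $\lambda_k$.} I would identify $\lambda_k$ by decomposing according to which marked vertex the cluster of $v_j$ reaches first. Work under $\phi(\cdot\mid D_j)$. By the domain Markov property \eqref{eq:revealed-clusters}, this measure is the law obtained after revealing $C_{U_j}$ and conditioning it to avoid $v_j$. Split $\1_{\{v_i\ind v_j\}}$ according to the smallest $k\in\{j+1,\dots,n\}$ with $v_k\in C_{v_j}$. I expect the coefficient $\lambda_k$ to be a covariance of $F(C_{v_j})$ with the event that $v_k\notin C_{U_k}$ in the ``remaining'' configuration, rewritten as a $\Psi$-type difference. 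More precisely, I aim to show that
\[
\lambda_k\,\phi(D_j)=\E\bigl[\Psi(K)-\Psi(K')\bigr]
\]
for suitable nested random sets $K'\subseteq K$. Here $\Psi$ is the function of Lemma~\ref{lem:deletion} with $U=U_k$ and $b=v_k$ (and with $F$ replaced by the indicator observable), and $K\supseteq K'$ arise from $C_{v_j}$ under the two measures being compared in the covariance. A coupling as in the proof of Lemma~\ref{lem:deletion} should give this nesting: a larger $F(C_{v_j})$ corresponds to a larger deleted set, by FKG and comparison of boundary conditions. The centering $\E[Z\mid\mathcal F_U]=0$ from \eqref{eq:Z-centered} is exactly what should make the contributions of $h_l$ for $l<k$ cancel in the recursion.

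\textbf{Main obstacle.} The hard step is this last one: showing that after subtracting $\sum_{l<k}\lambda_l h_l(k)$, the residual is a monotone-in-$K$ expression to which Lemma~\ref{lem:deletion} applies. When $q>1$ the cluster of $v_j$ influences the law of the rest of the graph, so one cannot simply factorize as in the Bernoulli case of \cite{BR26}. My first attempt would be an induction on $n-j$ that treats $v_{j+1}$ separately: use Lemma~\ref{lem:deletion} once with $U=U_{j+1}\cup\{v_j\}$ to peel off the $h_{j+1}$ component, and then apply the induction hypothesis to the shorter marked sequence with $v_j$ and $v_{j+1}$ merged into the deleted set.
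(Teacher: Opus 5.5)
Your reduction is correct. $\kappa_j^F(i)=0$ for $i\le j$, and the vectors $h_{j+1},\dots,h_n$ are unit lower triangular on the remaining coordinates. So $\kappa_j^F=\sum_{k>j}\lambda_k h_k$ for unique reals $\lambda_k$, and the lemma is exactly the statement $\lambda_k\ge0$. But the proof stops there. The identity $\lambda_k\phi(D_j)=\E[\Psi(K)-\Psi(K')]$ is only hoped for: the sets $K'\subseteq K$, the coupling producing them, and the ``indicator observable'' are never specified, and you yourself mark the sign as unresolved. So the entire content of the lemma is missing.

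The fallback you suggest also does not close as stated. Merging $v_j,v_{j+1}$ into a deleted set and invoking the induction hypothesis for a shorter marked sequence puts you under a different (deleted, or mixed) measure. Its connection vectors are not the original $h_{j+2},\dots,h_n$, which are defined by conditioning $\phi$ itself on $D_k$. So you would land in the wrong cone. (Note also that $U_{j+1}\cup\{v_j\}=U_{j+1}$.)

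The paper needs two ideas absent from your plan. First, it keeps $\phi$ and the marked sequence fixed and runs the descending induction on $j$ simultaneously for all increasing $F$. Take $Z$ from Lemma~\ref{lem:deletion} with $b=v_j$, $U=U_j$ and the same $F$, and set $z_i(\ell)=\E[Z\1_{\{v_\ell\ind R_i\}}]$ with $R_i=\{v_j,\dots,v_i\}$. These vectors are nonnegative by FKG conditional on $\mathcal F_{U_j}$, vanish for $i=n$, and satisfy $z_i=z_{i+1}+z_i(i+1)h_{i+1}+r_{i+1}\kappa_{i+1}^{\Psi}$. The remainder is a conditional covariance at a later index for the new increasing function $\Psi$. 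That is why the induction over all increasing $F$ closes and gives $z_j\in\cone\{h_{j+1},\dots,h_n\}$.

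Second, for $q>1$ the centering $g(C_{U_j})$ is random. So $z_j/r_j$ equals $\kappa_j^F-\kappa_j^{TF}$ rather than $\kappa_j^F$, where $(TF)(C)=\E^C[g(C_{U_j})]$. One then needs three facts: $T$ preserves monotonicity, the differences $\kappa_j^F-\kappa_j^{T^mF}$ telescope, and $\operatorname{osc}(T^mF)\le(1-\varepsilon)^m\operatorname{osc}(F)$. Together with closedness of the cone, this lets $m\to\infty$.

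In particular, the paper never writes the coefficients in closed form; they arise as a convergent series. A one-shot coupled formula of the type you guessed is therefore not what drives the argument. The random-centering issue is exactly where the $q>1$ dependence you mention has to be handled.
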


\begin{proof}
We use descending induction on $j$, simultaneously for all increasing $F$.

For $j=n$, each marked connection indicator is constant on $D_n$, so $\kappa_n^F=0$.

Fix $j<n$ and assume the statement for larger indices. We use $g,Z,\Psi$ defined in \eqref{eq:gZ} and \eqref{eq:Psi} with $b=v_j$, $U=U_j$ and F. For $j\leq i\leq n$, set
\begin{equation}\label{eq:zi}
 R_i:=\{v_j,\ldots,v_i\},\qquad
 z_i(\ell):=\E\bigl[Z\1_{\{v_\ell\ind R_i\}}\bigr].
\end{equation}

\smallskip\noindent First, we show that $z_j\in\cone\{h_{j+1},\ldots,h_n\}$. \\
We claim
\begin{equation}\label{eq:z-sign}
 z_i(\ell)\geq0\quad\text{for all }i,\ell,\qquad z_n=0.
\end{equation}
Indeed, conditional on $\mathcal{F}_{U_j}$, $Z$ is either zero if $v_j \in C_{U_j}$ or a centered increasing function of the free exterior configuration if $v_j \notin C_{U_j}$. The connection indicator $\1_{\{v_\ell\ind R_i\}}$ is either
determined or increasing in that exterior, so the FKG inequality and Lemma~\ref{lem:deletion} give
\[z_i(\ell)=\E\bigl[\E[Z\1_{\{v_\ell\ind R_i\}} \mid \mathcal{F}_{U_j}]\bigr] \geq \E\bigl[\E[Z \mid \mathcal{F}_{U_j}]\E[\1_{\{v_\ell\ind R_i\}} \mid \mathcal{F}_{U_j}]\bigr]=0.\]
For $i=n$, the indicator is $\mathcal{F}_{U_j}$-measurable
when $\ell<j$ and equals $1$ when $\ell\geq j$; Lemma~\ref{lem:deletion} gives
\begin{equation*}
    z_n(\ell)=\begin{cases}
        \E\bigl[\E[Z\1_{\{v_\ell\ind R_n\}} \mid \mathcal{F}_{U_j}]\bigr]=\E\bigl[\1_{\{v_\ell\ind R_n\}}\E[Z \mid \mathcal{F}_{U_j}]\bigr]=0, & \ell<j,\\
        \E[Z]=0,& \ell \geq j.
    \end{cases}
\end{equation*}
This proves the claim.

For $j\leq i<n$, adding $v_{i+1}$ to $R_i$ increases the indicator by
$\1_{\{v_{i+1}\notind R_i\}}\1_{\{v_\ell\ind v_{i+1}\}}$.
Also $\E[Z]=0$ gives $z_i(i+1)=- \E[Z \1_{v_{i+1} \notind R_i}]$. Hence
\begin{align}
 z_i(\ell)-z_{i+1}(\ell)-z_i(i+1)h_{i+1}(\ell)
 &=-\E\bigl[Z\1_{\{v_{i+1}\notind R_i\}}(\1_{\{v_\ell\ind v_{i+1}\}}-h_{i+1}(\ell))\bigr]\notag\\
 &=-\E\bigl[Z\1_{D_{i+1}}(\1_{\{v_\ell\ind v_{i+1}\}}-h_{i+1}(\ell))\bigr],\label{eq:z-calculation}
\end{align}
where the last equality follows because $\1_{\{v_{i+1}\ind U_j\}} \1_{\{v_{i+1} \notind R_i\}} \1_{\{v_{\ell} \ind v_{i+1}\}}$ is $\mathcal{F}_{U_j}$-measurable, and $Z$ has conditional mean zero by Lemma~\ref{lem:deletion}.

On $D_{i+1}$, the cluster $C_{v_{i+1}}$ avoids $U_j\cup R_i$. Conditional on $C_{v_{i+1}}=K$, the domain Markov property gives a free FK law on the exterior and hence $\E[Z\mid C_{v_{i+1}}=K]=\E^K[Z]=-\Psi(K)$. Thus \eqref{eq:z-calculation} gives
\begin{equation}\label{eq:z-recursion}
 z_i=z_{i+1}+z_i(i+1)h_{i+1}+r_{i+1}\kappa_{i+1}^{\Psi}.
\end{equation}
Since $\Psi$ is increasing by Lemma~\ref{lem:deletion}, the induction hypothesis applies to $\kappa_{i+1}^{\Psi}$. Backward iteration of \eqref{eq:z-recursion} from $z_n=0$ and \eqref{eq:z-sign} yield
\begin{equation}\label{eq:z-cone}
 z_j\in\cone\{h_{j+1},\ldots,h_n\}.
\end{equation}

\smallskip\noindent Next, we recover the covariance $\kappa_j^F$ from $z_j$, thus establishing \eqref{eq:cone}.\\
 For this, define an operator on functions of vertex sets by
\begin{equation}\label{eq:T}
 (TF)(C):=\E^C[g(C_{U_j})]
 =\E^C\!\left[\E^{C_{U_j}}[F(C_{v_j})]\right].
\end{equation}
The map $g$ is decreasing in its deletion set, while enlarging $C$ makes the sampled $C_{U_j}$ stochastically smaller by comparison between boundary conditions. Hence $T$ preserves increasing functions: $TF$ is increasing whenever $F$ is increasing.

We claim
\begin{equation}\label{eq:kappa-decomp}
 \kappa_j^F=\frac{z_j}{r_j}+\kappa_j^{TF}.
\end{equation}
Indeed, on $D_j$ the indicator $\1_{\{v_\ell\ind v_j\}}$ is determined by $C_{v_j}$, and thus
\begin{equation}\label{eq:gtoTF}
    \E[g(C_{U_j}) \1_{\{v_\ell\ind v_j\}} \mid C_{v_j},D_j]=\1_{\{v_\ell\ind v_j\}} (TF)(C_{v_j}).
\end{equation}
Moreover, $\E[F(C_{v_j})-g(C_{U_j})\mid C_{U_j}, D_j]=0$ implies
\begin{equation}\label{eq:F-g}
    \E[F(C_{v_j})-g(C_{U_j})\mid D_j]=0.
\end{equation}
Since $Z=\1_{D_j}(F(C_{v_j})-g(C_{U_j}))$ and $R_j=\{v_j\}$, for each $\ell$ we have
\begin{align*}
 \frac{z_j(\ell)}{r_j}
 &=\E\bigl[(F(C_{v_j})-g(C_{U_j}))\1_{\{v_\ell\ind v_j\}}\mid D_j\bigr]\\
 &=\E\bigl[(F(C_{v_j})-(TF)(C_{v_j}))\1_{\{v_\ell\ind v_j\}}\mid D_j\bigr]\\
 &=\kappa_j^F(\ell)-\kappa_j^{TF}(\ell),
\end{align*}
where we have used \eqref{eq:gtoTF} in the second equality and \eqref{eq:F-g} in the third equality. This proves~\eqref{eq:kappa-decomp} coordinatewise.
Iterating \eqref{eq:kappa-decomp} and applying \eqref{eq:z-cone} to $F,TF,\ldots,T^{m-1}F$ gives
\begin{equation}\label{eq:kappa-telescope}
 \kappa_j^F-\kappa_j^{T^mF}
 =\frac1{r_j}\sum_{t=0}^{m-1}z_j^{T^tF}
 \in\cone\{h_{j+1},\ldots,h_n\}.
\end{equation}
Here $z_j^{T^tF}$ denotes the vector constructed from $T^tF$.

To pass to the limit, note that $T$ may be viewed as a transition kernel: from $C$, sample $K:=C_{U_j}$ under $\phi^C$, then sample a new $v_j$-cluster under $\phi^K$. From every starting set $C$, the kernel outputs $\{v_j\}$ with probability at least
\[
 \varepsilon:=\prod_{e\ni v_j}(1-p_e)>0.
\]
This is because closing all edges incident to $v_j$ suffices, and $\phi(e \text{ is closed} \mid \omega_f, f\neq e) \geq 1-p_e$.
Removing mass $\varepsilon$ at $\{v_j\}$ from each transition law gives, for every $C \subseteq V$,
\[
 (1-\varepsilon)\min_{C'\subseteq V}F(C')
 \leq (TF)(C)-\varepsilon F(\{v_j\})
 \leq (1-\varepsilon)\max_{C'\subseteq V}F(C').
\]
Taking the difference between the maximum and minimum over $C$ and iterating yields
\begin{equation}\label{eq:osc}
 \operatorname{osc}(T^mF)\leq(1-\varepsilon)^m\operatorname{osc}(F),\qquad
 \operatorname{osc}(F):=\max_{C \subseteq V} F(C)-\min_{C \subseteq V} F(C).
\end{equation}
Consequently $\kappa_j^{T^mF}\to0$. The cone generated by finitely many vectors is closed, so \eqref{eq:kappa-telescope} implies \eqref{eq:cone} as $m\to\infty$.
\end{proof}

\section{Comparison weights inequality}\label{sec:comparison}
In this section, we prove Proposition~\ref{prop:comparison-weights}. Recall that $H=(h_1,\ \cdots, h_n), M:=H^{-1}$, and both $H$ and $M$ are unit lower triangular. Writing $e_j$ for the $j$th coordinate
unit vector, we have
\(
    Mh_j=e_j.
\)
Hence Lemma~\ref{lem:cone} gives, coordinatewise,
\begin{equation}\label{eq:M-kappa}
    M\kappa_j^F\geq0
    \qquad
    \text{for every increasing real-valued }F.
\end{equation}

To determine the signs of entries of $M$, take
\[
    F_0(C):=\1_{\{|C|\geq2\}},
    \qquad
    s_j:=\phi(C_{v_j}=\{v_j\}\mid D_j)>0.
\]
A direct calculation gives
\[
    \kappa_j^{F_0}=s_j(h_j-e_j).
\]
Therefore, by \eqref{eq:M-kappa},
\[
    0\leq M\kappa_j^{F_0}
    =s_j(e_j-Me_j),
\]
and hence
\begin{equation}\label{eq:M-sign}
    M_{ii}=1,
    \qquad
    M_{ij}\leq0
    \quad(i\ne j).
\end{equation}

\begin{proof}[Proof of Proposition \ref{prop:comparison-weights}]
We first assume $F(\emptyset) = 0$.
Write
\[
    f_i:=\E[F(C_{v_i})],
    \qquad
    f:=(f_1,\ldots,f_n)^T,
    \qquad
    m_j:=\E[F(C_{v_j})\mid D_j]\geq0.
\]
For each fixed $i$, the events
\(
    D_j\cap\{v_i\ind v_j\},
     1\leq j\leq n,
\)
partition the sample space. On that event,
$C_{v_i}=C_{v_j}$. Therefore, by the definition of conditional covariance, for each $i$,
\[
    f_i= \sum_{j=1}^n r_j\,
       \E\!\left[F(C_{v_j})\mathbf{1}_{\{v_i\ind v_j\}}
       \,\middle|\,D_j\right]
       = \sum_{j=1}^n r_j
       \bigl(m_jh_j(i)+\kappa_j^F(i)\bigr).
\]
In vector form, this gives
\begin{equation}\label{eq:first-marked}
    f=\sum_{j=1}^n r_j\bigl(m_jh_j+\kappa_j^F\bigr).
\end{equation}
Multiplying by $M$ and using \eqref{eq:M-kappa},
\[
    Mf
    =
    \sum_{j=1}^n
    r_j\bigl(m_je_j+M\kappa_j^F\bigr)
    \geq0.
\]
Taking the last coordinate and using $M_{nn}=1$ gives
\[
    f_n+\sum_{j<n}M_{nj}f_j\geq0,
\]
which is exactly \eqref{eq:weighted-cluster-comparison}.

Finally, taking $F\equiv1$ in \eqref{eq:first-marked} (which is valid for any $F$) gives
\[
    \1=Hr,
    \qquad
    \text{ hence }M\1=r,
    \qquad
    r:=(r_1,\ldots,r_n)^T.
\]
The last coordinate gives

\[
    \sum_{j<n}c_j
    =1-r_n=1-\phi(D_n)
    =\phi(v_n\ind U_n).
\]
This completes the proof of the proposition under the assumption $F(\emptyset) = 0$. For general $F$, we may apply \eqref{eq:weighted-cluster-comparison} to $\tilde{F}:=F-F(\emptyset)$ to conclude the proof.
\end{proof}

\section{Proof of the main theorem}
In this section, we prove our main results on connection inequalities.

\begin{proof}[Proof of Theorem~\ref{thm:conditional}]
If $x\in A$, then the theorem holds trivially. So we assume $x \notin A$. Let
\[
    A:=\{v_1,\ldots,v_{n-1}\},
    \qquad
    v_1:=a^*,
    \qquad
    v_n:=x,
\]
and let $c_1,\ldots,c_{n-1}$ be the corresponding comparison weights defined in Proposition~\ref{prop:comparison-weights}.
Define
\[G(K):=\begin{cases}
    \E^K[F(C_{a^*})],& a^* \notin K\\
    F(\emptyset)-F(V)+F(K), & a^* \in K.
\end{cases}
\]
Let
\[\tilde{F}(K):=\left[F(K)-G(K)+F(V)-F(\emptyset)\right] \1_{\{K \cap A \neq \emptyset \}}.\]
Let $D:=F(V)-F(\emptyset)$. On sets avoiding $a^*$, the bracket $F(K)- \E^K[F(C_{a^*})]+D$ is increasing and belongs to $[0,2D]$; on sets containing $a^*$, $\tilde{F}(K)=2D$. So $\tilde{F}$ is nonnegative and increasing. Proposition~\ref{prop:comparison-weights} implies that
\begin{equation}\label{eq:F-gineq}
    \E\bigl[\left(F(C_{v_n})- G(C_{v_n})\right) \1_{\{C_{v_n}\cap A \neq \emptyset\}} \bigr] \geq \sum_{j<n} c_j \E\bigl[\left(F(C_{v_j})- G(C_{v_j})\right) \1_{\{C_{v_j}\cap A \neq \emptyset\}} \bigr],
\end{equation}
where we have used \eqref{eq:weight-mass} to cancel the contribution from $[F(V)-F(\emptyset)]\1_{\{K \cap A \neq \emptyset \}}$. By conditioning on $C_{v_j}$, it is easy to check that
\[\E\bigl[\E^{C_{v_j}}[F(C_{a^*})] \1_{\{C_{v_j}\cap A \neq \emptyset, v_j \notind a^*\}}\bigr]=\E[F(C_{a^*})\1_{\{C_{v_j}\cap A \neq \emptyset, v_j \notind a^*\}}],\qquad 1\leq j \leq n.\]
Therefore,
\[\E[G(C_{v_j})\1_{\{C_{v_j}\cap A \neq \emptyset\}}]=\E[F(C_{a^*})\1_{\{C_{v_j}\cap A \neq \emptyset\}}] + [F(\emptyset)-F(V)]\phi(v_j\ind a^*), \qquad 1 \leq j \leq n.\]
Plugging these into \eqref{eq:F-gineq}, we get
\begin{equation}\label{eq:Fdiffatxanda*}
    \E\bigl[\left(F(C_x)-F(C_{a^*})\right) \1_{\{x\ind A\}}\bigr] \geq \sum_{j<n} c_j \E\bigl[\left(F(C_{v_j})-F(C_{a^*})\right) \bigr],
\end{equation}
where we have used $h_1(n)-\sum_{j<n}c_jh_1(j)=0$ to cancel the contribution involving $F(\emptyset)-F(V)$; the latter equality is  $Mh_1=e_1$. This completes the proof of the theorem since for each $j<n$, $c_j \geq 0$ and $\E[F(C_{v_j})]\geq \E[F(C_{a^*})]$ by the definition of $a^*$.
\end{proof}
\begin{remark}\label{rem:continuity}
    Note that \eqref{eq:Fdiffatxanda*} holds with $a^*$ replaced by any fixed $a\in A$. Therefore, 
    \begin{equation}\label{eq:Fdiffatxanda}
        \E\bigl[\left(F(C_x)-F(C_{a})\right) \1_{\{x\ind A\}}\bigr] 
        \geq \min_{b \in A}\E[F(C_{b})]-\E[F(C_{a})],
    \end{equation}
    where the last inequality follows from $\sum_{j<n} c_j \in [0,1]$. By continuity, choosing $a$ to be any minimizer at the limiting parameter vector extends Theorem~\ref{thm:conditional} to all edge parameters in $[0,1]$.
    
    
\end{remark}

Although it is clear that Corollary~\ref{cor:connection} follows directly from Theorem~\ref{thm:conditional}, we give a separate proof using Proposition~\ref{prop:comparison-weights} here. 
\begin{proof}[Proof of Corollary~\ref{cor:connection}]
The case $x\in A$ is immediate. Assume $x\notin A$, and write
\[
    A:=\{v_1,\ldots,v_{n-1}\},
    \qquad
    v_n:=x.
\]
Define
\[
    F(C)
    :=
    \1_{\{C\cap A\ne\varnothing\}}
    \1_{\{C\cap B\ne\varnothing\}}.
\]
Then $F$ is increasing. Proposition~\ref{prop:comparison-weights} implies
\begin{align*}
      \phi(x\ind A,\ x\ind B)
    &\geq
    \sum_{j<n}c_j\phi(v_j\ind B) \notag\\
    &\geq
    \phi(x\ind A)
    \min_{a\in A}\phi(a\ind B). \qedhere
\end{align*}
\end{proof}

\section{Application of the inequality}

We conclude with a simple application to the point-to-plane connection
probability. 
We first record the following weighted consequence.

\begin{lemma}\label{lem:weighted}
Recall that $\phi_{\Z^d}$ is the free infinite-volume random-cluster measure on $\Z^d$ with $p\in (0,1)$ and $q\geq1$.
For $w_y\geq0$, set
\[
 S_w(z):=\sum_{y\in \Z^d}w_y\phi_{\Z^d}(z\ind y).
\]
Then, for every $x\in \Z^d$ and every finite nonempty $A\subset \Z^d$,
\begin{equation}\label{eq:weighted}
 S_w(x)
 \geq
 \phi_{\Z^d}(x\ind A)\min_{a\in A}S_w(a).
\end{equation}
\end{lemma}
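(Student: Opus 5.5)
Apply Proposition~\ref{prop:comparison-weights} on a finite box with a weighted increasing function, then pass to the infinite-volume limit.

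\textbf{Finite volume.} Fix $x$, $A$ and weights $w$. The case $x\in A$ is trivial, since then $\phi_{\Z^d}(x\ind A)=1$ and the minimum is at most $S_w(x)$. So assume $x\notin A$. It suffices to treat finitely supported $w$: for general $w\ge0$, take $w^{(N)}:=w\1_{\Lambda_N}$. Then $S_{w^{(N)}}(z)\uparrow S_w(z)$ for each $z$ by monotone convergence, and the minimum over the finite set $A$ passes to the limit. This also covers the case where some values are infinite.

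With $w$ finitely supported, pick a box $\Lambda_L$ containing $x$, $A$ and the support of $w$. Work with the free measure $\phi_L$ on $\Lambda_L$. Set $A=\{v_1,\dots,v_{n-1}\}$, $v_n=x$, and define
\[
F(C):=\1_{\{C\cap A\neq\emptyset\}}\sum_{y}w_y\1_{\{y\in C\}}.
\]
This $F$ is increasing, with $F(\emptyset)=0$. Proposition~\ref{prop:comparison-weights} gives
\[
\E_L[F(C_x)]\ge\sum_{j<n}c_j\E_L[F(C_{v_j})].
\]
For $j<n$ we have $v_j\in A$, so $\E_L[F(C_{v_j})]=\sum_y w_y\phi_L(v_j\ind y)=:S^L_w(v_j)$. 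Also $F(C_x)\le\sum_y w_y\1_{\{y\in C_x\}}$, so the left side is at most $S^L_w(x)$. By \eqref{eq:weight-mass}, $\sum_{j<n}c_j=1-\phi_L(x\notind A)=\phi_L(x\ind A)$. Since all $c_j\ge0$, we obtain
\[
S^L_w(x)\ge\phi_L(x\ind A)\min_{a\in A}S^L_w(a).
\]
This is the finite-volume version of \eqref{eq:weighted}. It could equally be obtained by summing \eqref{eq:strong} over $y$, but a single application of the proposition keeps the weights $c_j$ common to all $y$, which is what lets the minimum come out.

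\textbf{Limit.} Let $L\to\infty$. The free measures $\phi_L$ converge weakly to $\phi_{\Z^d}=\phi^0_{\Z^d,p,q}$. The events $\{z\ind y\}$ and $\{x\ind A\}$ in $\Lambda_L$ are increasing, and for fixed $z,y$ they increase with $L$. Together with the monotonicity of free measures in the domain, this gives $\phi_L(z\ind y)\uparrow\phi_{\Z^d}(z\ind y)$ and $\phi_L(x\ind A)\uparrow\phi_{\Z^d}(x\ind A)$.

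This is the main point to check. In infinite volume the connection event is a countable increasing union of local increasing events (connection within $\Lambda_R$). For each fixed $R$ the local event has $\phi_L$-probability converging to its $\phi_{\Z^d}$-probability, and the $\phi_L$-probability is monotone in $L$. Exchanging the two monotone limits gives the claim.

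Because $w$ is finitely supported, the sums $S^L_w$ converge to $S_w$ termwise, and the minimum over the finite set $A$ commutes with the limit. Passing to the limit in the finite-volume inequality yields \eqref{eq:weighted}.
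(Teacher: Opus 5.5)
Your proof is correct and follows the paper's argument. You apply Proposition~\ref{prop:comparison-weights} to a weighted-sum $F$ and use $c_j\geq 0$ and $\sum_{j<n}c_j=\phi(x\ind A)$; the paper does the same, differing only in two technical points:
\begin{itemize}
\item The factor $\1_{\{C\cap A\neq\emptyset\}}$ in your $F$ is unnecessary. The paper takes $F(C)=\sum_{y\in C}w_y$, which gives $\E[F(C_x)]\geq\sum_{j<n}c_j S_w(v_j)$ directly.
\item You pass the scalar inequality to infinite volume after truncating $w$. The paper instead passes the weights $c_a$ themselves through the exhaustion, so your limit step is, if anything, more explicit.
\end{itemize}

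One correction to your aside: summing \eqref{eq:strong} over $y$ does not give the lemma. It only yields the lower bound $\phi(x\ind A)\sum_y w_y\min_{a\in A}\phi(a\ind y)$, which is at most $\phi(x\ind A)\min_{a\in A}S_w(a)$. That is exactly why the common weights $c_j$ are needed.
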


\begin{proof}
Passing Proposition~\ref{prop:comparison-weights} through a finite free
exhaustion gives $c_a\geq0$ such that
\[
 \sum_{a\in A}c_a=\phi_{\Z^d}(x\ind A),
 \qquad
 \E_{\Z^d}[F(C_x)]
 \geq
 \sum_{a\in A}c_a\E_{\Z^d}[F(C_a)],
\]
whenever $F$ is nonnegative and increasing. Taking
\[
 F(C)=\sum_{y\in C}w_y,
\]
we obtain
\[
 S_w(x)
 \geq
 \sum_{a\in A}c_aS_w(a)
 \geq
 \phi_{\Z^d}(x\ind A)\min_{a\in A}S_w(a).\qedhere
\]
\end{proof}

We are ready to prove Proposition~\ref{prop:point-plane}.
\begin{proof}[Proof of Proposition~\ref{prop:point-plane}]
By FKG and translation invariance,
\[
 \tau(u+v)\geq\tau(u)\tau(v),
\]
so Fekete's lemma gives
\[
 m=\inf_{k\geq1}
 \frac{-\log\tau(ke_1)}{k},
 \qquad
 \tau(ke_1)\leq e^{-mk}.
\]
By reflection symmetry and FKG,
\[
 \tau(y)^2
 \leq
 \tau(2\norminf{y}e_1)
 \leq
 e^{-2m\norminf{y}},
\]
hence
\[
 \tau(y)\leq e^{-m\norminf{y}}.
\]

Proposition~\ref{prop:point-plane} is trivial if $m=0$. So we may assume $m>0$. For $0<s<m$, set
\[
 M_1(s):=\sum_{y\in\Z^d}e^{sy_1}\tau(y)<\infty.
\]
Taking $w_y=e^{sy_1}$ and $x=0$ in Lemma~\ref{lem:weighted} gives, for every
finite nonempty $A\subset P_n$,
\[
 M_1(s)
 \geq
 \phi_{\Z^d}(0\ind A)e^{sn}M_1(s),
\]
and therefore
\[
 \phi_{\Z^d}(0\ind A)\leq e^{-sn}.
\]
Letting $A\uparrow P_n$ and $s\uparrow m$ proves
\eqref{eq:hyperplane}.

Finally,
\[
 \{0\ind\partial\Lambda_n\}
 \subseteq
 \bigcup_{i=1}^d
 \bigl(
   \{0\ind P_n^{(i)}\}
   \cup
   \{0\ind P_{-n}^{(i)}\}
 \bigr),
\]
where $P_{\pm n}^{(i)}=\{y:y_i=\pm n\}$.
Thus lattice symmetry and the union bound give
\[
 \phi_{\Z^d}(0\ind\partial\Lambda_n)
 \leq
 2d\,e^{-mn}. \qedhere
\]
\end{proof}

\section*{Acknowledgments}
This research was partially supported by the National Natural Science Foundation of China (No. 12271284 and No. 12226001). The authors acknowledge the use of ChatGPT as an auxiliary tool in proof development. The authors take full responsibility for the content.

\bibliographystyle{abbrv}
\bibliography{reference}

\end{document}